\numberwithin{equation}{section}
\newtheorem{thm}{Theorem}[section]
\newtheorem{lma}[thm]{Lemma}
\newtheorem{cor}[thm]{Corollary}
\newtheorem{prop}[thm]{Proposition}
\theoremstyle{definition}
\newtheorem{rem}[thm]{Remark}
\renewcommand{\geq}{\geqslant}
\renewcommand{\leq}{\leqslant}
\title{First and second moments for self-similar couplings and Wasserstein distances}
\author{Jonathan M. Fraser\\ \\
\emph{Mathematics Institute, Zeeman Building,}\\ \emph{University of Warwick, Coventry, CV4 7AL, UK}\\ \emph{e-mail: jon.fraser32@gmail.com}}
\begin{document}
\maketitle

\begin{abstract}
We study aspects of the Wasserstein distance in the context of self-similar measures.  Computing this distance between two measures involves minimising certain moment integrals over the space of \emph{couplings}, which are measures on the product space with the original measures as prescribed marginals. We focus our attention on self-similar measures associated to equicontractive iterated function systems satisfying the open set condition and consisting of two maps on the unit interval.  We are particularly interested in understanding the restricted family of \emph{self-similar} couplings and our main achievement is the explicit computation of the 1st and 2nd moment integrals for such couplings.  We show that this family is enough to yield an explicit formula for the 1st Wasserstein distance and provide non-trivial upper and lower bounds for the 2nd Wasserstein distance.
\\ \\
\emph{Mathematics Subject Classification} 2010:  Primary: 28A80, 28A33, 60B05. Secondary: 28A78.
\\ \\
\emph{Key words and phrases}:  Wasserstein metric, self-similar measure, self-similar coupling.
\end{abstract}

\section{Introduction}

The Wasserstein metric is widely used as an informative and computable distance function between mass distributions. In computer science it is commonly referred to as the `earth mover's distance' and is a measure of the `work' required to change one distribution into the other.  For discrete distributions on finite sets one can develop efficient algorithms to determine the distance, but in the non-discrete setting calculations can be far from trivial and involve minimising certain moment integrals over the space of \emph{couplings}, which are measures on the product space with the original measures as prescribed marginals.
\\ \\
In this paper we study the 1st and 2nd moment integrals for self-similar couplings of pairs of self-similar measures  arising from equicontractive iterated function systems satisfying the open set condition (OSC) and consisting of two maps on the unit interval.  Given two such measures, the family of self-similar couplings is a 1-parameter family and we are able to give an explicit formula for the 1st and 2nd moments for all measures in this family in terms of this parameter and the defining parameters of the original measures.  This gives natural upper bounds on the 1st and 2nd Wasserstein distances between the original measures and leads us to the following natural questions: `Can the Wasserstein distances be realised by self-similar couplings?' and `how do the 1st and 2nd moment integrals depend on the defining parameters?'  In the case of the 1st distance, we use the Kantorovich-Rubinstein duality theorem, which involves maximising the integral of 1-Lipschitz test functions with respect to the difference of the two measures, to prove that self-similar couplings are indeed sufficient. We thus derive an explicit formula for the 1st Wasserstein distance in terms of the different probability vectors, the contraction parameter, and the translation vectors and, moreover, can exhibit an explicit coupling which realises the distance.   Once we have the formula for the 1st Wasserstein distance we are able to make the following peculiar observation.  If the translation vectors are chosen such that the end points of the unit interval are in the support of the measure (i.e. the support is the middle $(1-2c)$ Cantor set), then the 1st Wasserstein distance does not depend on the contraction parameter $c$, but in all other cases it does.  It appears that this phenomenon does not occur for the 2nd Wasserstein distance.  We conclude with a detailed discussion of our assumptions and prospects for future work.  
\\ \\

\subsection{The Wasserstein metric}

Let $(X,d)$ be a compact metric space and $\mathcal{P}(X)$ be the set of Borel probability measures on $X$.  For measures $\mu, \nu \in \mathcal{P}(X)$, let $\Gamma(\mu,\nu)$ be the set of \emph{couplings} of $\mu$ and $\nu$, i.e., the set of Borel probability measures on $X \times X$ with marginals $\mu$ and $\nu$ on the first and second coordinate, respectively.  For $\rho\geq 1$, the $\rho$th \emph{Wasserstein distance} between $\mu$ and $\nu$ is defined by
\[
W_\rho(\mu, \nu) \ = \ \inf_{\gamma \in \Gamma(\mu,\nu)} \Bigg( \int_{X \times X} d(x,y)^\rho  \, \text{d} \gamma(x,y) \Bigg)^{1/\rho}. 
\]
It can be shown that $(\mathcal{P}(X), W_\rho)$ is a separable and complete metric space.  Moreover, convergence in $W_\rho$ is equivalent to weak convergence of measures.  Although it is surplus to our requirements, it is possible to define the Wasserstein metric on non-compact spaces. In this case $\mathcal{P}(X)$ must be restricted to measures with finite $\rho$th moment and convergence in $W_\rho$ is then equivalent to weak convergence plus convergence of the $\rho$th moment.   For more details on the basic properties of $W_\rho$ see \cite[Chapter 6]{villani}.  An important property for our purposes is that $W_{\rho_1} \leq W_{\rho_2}$ for $\rho_1 \leq \rho_2$, which is a simple consequence of H\"older's inequality.  The Wasserstein metric has its origins in \emph{transportation theory} and in particular \emph{optimal transport}.  Consider the following real world problem.  You have a certain fixed number $N$ units of some product stored in another fixed number $n \ll N$ warehouses according to a given configuration.  Due to a shift in supply and demand, you now wish to store the same $N$ units in the same $n$ warehouses, but in a different configuration.  This leaves you with an optimal transportation problem of the form, `What is the least amount of `work' needed to be done to make the required change in configuration?'  If `work' is interpreted as the product of the number of units with the distance travelled, then the answer is the 1st Wasserstein distance.  To fit in to the abstract framework described above, the warehouses are replaced by points in the plane and the units of product are replaced by point masses with weight $1/N$. For more information on the history and development of this concept, see \cite{villani} and the references therein.   As Villani points out \cite[Remark 6.6]{villani}, the 1st and 2nd Wasserstein distances are the most important in theory and in applications, with $W_1$ often simpler to compute, but with $W_2$ often a better reflection of the relevant geometric properties of the system.  The Wasserstein distance has found its way into a myriad of different fields where one wants a computable and meaningful way of gauging the distance between two distributions, measures, large data sets etc.  For a recent example see \cite{lunel}, where it has been effectively used to uncover long term dynamical properties of different systems by measuring the distance between probability distributions derived from time series analysis.  These ideas are applied to large data sets taken from studying physiological and neurological outputs from the human body, with many potential applications in medical science.

\subsection{Our class of self-similar measures and self-similar couplings}

Rather than computing the Wasserstein metric itself, the main goal of this article is to understand the moment integrals for at least a large and representative family of couplings and to develop techniques for handling such integrals. We wish to analyse a class of measures which is both simple to describe but with enough freedom and potential for complexity that the results are illuminating and interesting.  One natural such class is that of \emph{self-similar measures}, which are defined as follows.  Let $(X,d)$ be a compact metric space and $\{S_i\}_{i \in \mathcal{I}}$ be a finite collection of contracting similitudes mapping $X$ into itself.   Such a collection is called an \emph{iterated function system} (IFS). To this IFS, associate a probability vector $\{p_i\}_{i \in \mathcal{I}}$, with each $p_i \in (0,1)$ and such that $\sum_{i \in \mathcal{I}} p_i = 1$.  It follows that there is a unique non-empty compact set $F \subseteq X$ satisfying
\[
F \ = \ \bigcup_{i \in \mathcal{I}} \  S_i(F)
\]
called the \emph{self-similar set} associated with the system and a unique Borel probability measure $\mu \in \mathcal{P}(X)$ satisfying
\[
\mu \ = \ \sum_{i \in \mathcal{I}} \ p_i \, \mu \circ S_i^{-1}
\]
called the \emph{self-similar measure} associated with the system, which is supported on $F$.  For a review of self-similar sets and measures see \cite[Chapters 9 and 17]{falconer} and the references therein. Ideally, we would like to compute the Wasserstein distances between any two different self-similar measures defined via the same IFS and to understand the behaviour of the moment integrals for a natural family of couplings, for example, Bernoulli measures for the product system.  This is likely a very difficult problem in full generality, not least because the self-similar support can have complicated overlaps, and so we significantly simplify the situation.  We believe our methods could be adapted to deal with a more general setting than what follows, but we refrain from making the result as general as possible in order to aid clarity of exposition.  We include a discussion along these lines in Section \ref{discussion}.  Let $X = [0,1]$, $d$ be induced by the Euclidean norm $\lvert \cdot \rvert$, $c \in (0,1/2]$, $t_1 \in [0,1-2c]$ and $t_2 \in [t_1+c, 1-c]$.  Our IFS will consist of the pair of contractions defined by
\[
S_1(x) = cx+t_1 \qquad \text{and} \qquad S_2(x) = cx+t_2.
\]
The resulting self-similar set $F$ is a totally disconnected Cantor like subset of $[0,1]$, unless $c = 1/2$ in which case it is the whole interval.  If $c = 1/3$, $t_1=0$ and $t_2 = 2/3$, then we obtain the classical middle third Cantor set, one of the most famous and earliest examples of a fractal.  For $p \in (0,1)$, let $\mu_p \in \mathcal{P}(F)$ denote the self-similar measure supported on $F$ corresponding to choosing $p_1 = p$ and $p_2 = 1-p$.
\\ \\
The restrictions on $c, t_1$ and $t_2$ guarantee that the resulting IFS satisfies the OSC with the open set being the open unit interval $(0,1)$.  Our arguments and results go through in the same way if a smaller open interval needs to be used, but this is equivalent to our setting by rescaling and so we omit further details.  The major advantage of this class of self-similar measures is that the Bernoulli measures defined on the product system are themselves self-similar measures and, moreover, this forms a 1-parameter family which makes our analysis more transparent. Let $p,q \in (0,1)$ with $p \neq q$ and first consider the set of all couplings of $\mu_p$ and $\mu_q$.  This consists of measures $\gamma$ supported on $F \times F$, such that the projection of $\gamma$ onto the first coordinate is $\mu_p$ and the projection of $\gamma$ onto the second coordinate is $\mu_q$.  Since $F \times F$ is itself a self-similar set, it is natural to consider self-similar measures $\gamma$ with the desired marginals.  Indeed, consider the four similarity maps on $[0,1]^2$ defined by
\[
S_{1,1}(x,y)  \ =  \ (S_1(x), S_1(y))
\]
\[
S_{1,2}(x,y)  \ =  \ (S_1(x), S_2(y))
\]
\[
S_{2,1}(x,y)  \ =  \ (S_2(x), S_1(y))
\]
and
\[
S_{2,2}(x,y)  \ =  \ (S_2(x), S_2(y)).
\]
In fact $F \times F$ is the self-similar set associated to this system and, moreover, the only associated self-similar measures which are also couplings of $\mu_p$ and $\mu_q$ are given by probability vectors of the form
\[
(r, \,  p-r, \,  q-r, \, 1-p-q+r)
\]
associated to the above maps in the given order and where $r$ must be chosen to satisfy
\[
\max\{0,p+q-1\} \ < \  r \  < \  \min \{p,q\}.
\]
We will denote the open interval consisting of such $r$ by $\Lambda_{p,q}$ and, for a given $r \in \Lambda_{p,q}$, the associated self-similar measure will be denoted by $\gamma_r \in \Gamma(\mu_p, \mu_q)$.  The main goal of this paper is to compute 
\begin{equation} \label{keyintegral}
\Bigg( \int_{F \times F} \lvert x - y \rvert^\rho  \, \text{d} \gamma_r(x,y) \Bigg)^{1/\rho}
\end{equation}
for $\rho = 1,2$ (Theorems \ref{gammarint} and \ref{gammarint2}), which has the ancillary benefit of allowing us to estimate the 1st and 2nd Wasserstein distances (Corollaries \ref{main} and \ref{upperbound2}).

\section{Results}

\subsection{First and second moments for our family of self-similar couplings}

Our first main result gives an explicit formula for the 1st moment.

\begin{thm} \label{gammarint}
For $r \in  \Lambda_{p,q}$, we have
\[
 \int_{F \times F} \lvert x - y \rvert  \, \text{d} \gamma_r(x,y) \ = \ \frac{t_2-t_1}{1-c} \, \frac{c(p-q)^2+(1-c)(p+q-2r)}{(1-c)+c(p+q-2r)}.
\]
\end{thm}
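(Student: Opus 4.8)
The plan is to exploit the self-similarity of $\gamma_r$ to turn the integral into a single self-referential linear equation, which can then be solved by hand. Write $\delta = t_2 - t_1$ and abbreviate the target quantity by $I = \int_{F \times F} \lvert x - y \rvert \, \text{d}\gamma_r(x,y)$. Since $\gamma_r$ is the self-similar measure for the IFS $\{S_{i,j}\}$ with weights $(P_{1,1}, P_{1,2}, P_{2,1}, P_{2,2}) = (r,\, p-r,\, q-r,\, 1-p-q+r)$, we have $\gamma_r = \sum_{i,j} P_{i,j}\, \gamma_r \circ S_{i,j}^{-1}$, so that for any bounded Borel $f$,
\[
\int f \, \text{d}\gamma_r \ = \ \sum_{i,j} P_{i,j} \int f \circ S_{i,j} \, \text{d}\gamma_r.
\]
Taking $f(x,y) = \lvert x-y \rvert$ and observing that $f(S_{i,j}(u,v)) = \lvert c(u-v) + (t_i - t_j) \rvert$, the two diagonal maps ($i=j$) contribute $c\lvert u - v \rvert$ under the integral sign, giving a total of $(1 - p - q + 2r)\, c I$, while the two off-diagonal maps contribute integrals of $\lvert c(u-v) \mp \delta \rvert$.

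The crux will be to strip the absolute values from the two off-diagonal terms, and this is exactly where the hypothesis $c \le 1/2$ must be used. The extreme points of $F$ are the fixed points $t_1/(1-c)$ and $t_2/(1-c)$ of $S_1$ and $S_2$, so for $(u,v) \in F \times F$ the signed difference $u-v$ ranges over $[-\delta/(1-c),\, \delta/(1-c)]$. Hence $c(u-v) + \delta \ge \delta\bigl(1 - c/(1-c)\bigr) = \delta(1-2c)/(1-c) \ge 0$ and, symmetrically, $c(u-v) - \delta \le 0$, both precisely because $c \le 1/2$. Therefore the $(2,1)$ integrand equals $c(u-v) + \delta$ and the $(1,2)$ integrand equals $\delta - c(u-v)$, with no absolute values, so each reduces to an expression in the signed first moment $M = \int_{F \times F}(u-v)\,\text{d}\gamma_r(u,v)$, namely $cM + \delta$ and $\delta - cM$ respectively.

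It then remains to compute $M$. Since the marginals of $\gamma_r$ are $\mu_p$ and $\mu_q$, we have $M = m_p - m_q$, where $m_p, m_q$ denote the means of $\mu_p, \mu_q$. The self-similarity of $\mu_p$ gives $m_p = c\,m_p + p\,t_1 + (1-p)\,t_2$, so $m_p = (t_2 - p\delta)/(1-c)$ and likewise $m_q = (t_2 - q\delta)/(1-c)$, whence $M = (q-p)\delta/(1-c)$.

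Assembling the four contributions produces the single linear equation
\[
I \ = \ (1 - p - q + 2r)\, c I + (p-r)(\delta - cM) + (q-r)(\delta + cM),
\]
whose right-hand side collapses to $(1 - p - q + 2r)\,cI + \delta(p+q-2r) + cM(q-p)$. Solving for $I$, substituting $M = (q-p)\delta/(1-c)$ and $\delta = t_2 - t_1$, and collecting over the common denominator $1 - c + c(p+q-2r)$ yields the stated formula. The only genuine obstacle is the sign analysis of the off-diagonal integrands in the second step; everything else is routine bookkeeping.
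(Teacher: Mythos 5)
Your proof is correct, and its skeleton---push $\lvert x-y\rvert$ through the four maps via the identity $\gamma_r=\sum_{i,j}P_{i,j}\,\gamma_r\circ S_{i,j}^{-1}$, observe that the diagonal terms reproduce $cI$, strip the absolute values from the off-diagonal terms, and solve the resulting linear equation for $I$---is the same as the paper's. The genuine difference is how the signed moment $M=\int_{F\times F}(x-y)\,\text{d}\gamma_r$ is evaluated. The paper devotes a substantial technical result (Lemma \ref{techlem}) to this: it decomposes the off-diagonal part of $F\times F$ into countably many families of squares, matches the lower family to the upper one by an explicit translation, and thereby expresses $M$ as an affine function of $I$ itself, which is then substituted back. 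You instead note that $M$ is determined outright by the marginals, $\int x\,\text{d}\gamma_r=\int x\,\text{d}\mu_p$ and $\int y\,\text{d}\gamma_r=\int y\,\text{d}\mu_q$, and compute the two means by one line of self-similarity (this is essentially Proposition \ref{lowerbound} with $\lambda=1$). This is a real simplification: it renders Lemma \ref{techlem} unnecessary for the first moment, and the two evaluations of $M$ can be checked to agree using $(p+q-2r)^2-(p-q)^2=4(p-r)(q-r)$. What the paper's heavier route buys is a relation intrinsic to $\gamma_r$ of the type one would need for odd moments $\rho\ge 3$, where $\int(x-y)^\rho\,\text{d}\gamma_r$ involves cross moments not determined by the marginals alone; your shortcut is specific to $\rho=1$ (and to the signed first moment needed again in Theorem \ref{gammarint2}). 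Finally, your sign analysis for the off-diagonal integrands, via $\lvert u-v\rvert\le (t_2-t_1)/(1-c)$ and $c\le 1/2$, is a valid alternative to the paper's use of the separation hypothesis $t_2\ge t_1+c$, which gives $x\le y$ on $S_{1,2}(F\times F)$ directly.
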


We will prove Theorem \ref{gammarint} in Section \ref{gammarintproof}.   For fixed $p,q, t_1, t_2$ and $c$, consider the function $\Phi_1:\mathbb{R} \to \hat{\mathbb{R}}$ defined by
\[
\Phi_1(r) \ = \  \frac{t_2-t_1}{1-c} \,\frac{c(p-q)^2+(1-c)(p+q-2r)}{(1-c)+c(p+q-2r)}.
\]
Elementary analysis yields the following basic facts.

\begin{prop} \label{basicexp}
For fixed $p,q, t_1, t_2, c$ we have that $\Phi_1$ is
\begin{itemize}
\item[(1)]  a rational function of $r$ with one simple pole at
\[
\frac{p+q}{2}+\frac{1-c}{2c}
\]
lying strictly to the right of the interval $\Lambda_{p,q}$.
\item[(2)] has one root at
\[
 \frac{p+q}{2} + \frac{c(p-q)^2}{2(1-c)}
\]
lying strictly to the right of the interval $\Lambda_{p,q}$ and strictly to the left of the pole.
\item[(3)] is differentiable at all $r \in \mathbb{R}$, apart from at the pole, with
\[
\Phi_1'(r) \ = \  2(t_2-t_1)\frac{c^2(p-q)^2-(1-c)^2}{(1-c)\big(1-c +c(p+q-2r)\big)^2} \ < \ 0
\]
and is thus strictly decreasing on the interval $\Lambda_{p,q}$.
\item[(4)] is strictly positive on the interval $\Lambda_{p,q}$.
\end{itemize}
\end{prop}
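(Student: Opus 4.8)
The plan is to treat $\Phi_1$ as a positive constant times a ratio of two affine functions of $r$ and to reduce every claim to a single elementary inequality. I would write $\Phi_1(r) = K\,N(r)/D(r)$ with $K = (t_2-t_1)/(1-c)$, $N(r) = c(p-q)^2+(1-c)(p+q-2r)$ and $D(r) = (1-c)+c(p+q-2r)$, noting first that $K > 0$ since the standing assumptions force $t_2 \geq t_1+c > t_1$ and $c < 1$. The engine of the whole argument is the inequality
\[
c^2(p-q)^2 \ < \ (1-c)^2,
\]
which I would establish at the outset: as $p,q \in (0,1)$ we have $|p-q| < 1$, so $c|p-q| < c \leq 1-c$ using $c \leq 1/2$, and squaring gives the claim. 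Essentially every part below invokes this one fact.

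For part (1), since $N$ and $D$ are affine in $r$, the function $\Phi_1$ is rational and its poles are exactly the zeros of $D$; solving $D(r)=0$ returns the stated value $(p+q)/2 + (1-c)/(2c)$. To see the pole is simple I would check that $N$ does not also vanish there: substituting $p+q-2r = -(1-c)/c$ into $N$ produces $c(p-q)^2 - (1-c)^2/c$, which is nonzero precisely by the key inequality. That the pole lies strictly to the right of $\Lambda_{p,q}$ follows because the right endpoint of $\Lambda_{p,q}$ is $\min\{p,q\} < (p+q)/2$ (strict since $p\neq q$), while the pole exceeds $(p+q)/2$ by the positive amount $(1-c)/(2c)$. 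Part (2) is parallel: the zeros of $\Phi_1$ are the zeros of $N$ away from the pole, and solving $N(r)=0$ gives $(p+q)/2 + c(p-q)^2/(2(1-c))$. Since $p\neq q$ this strictly exceeds $(p+q)/2 > \min\{p,q\}$, placing it to the right of $\Lambda_{p,q}$; and it lies strictly to the left of the pole exactly when $c(p-q)^2/(2(1-c)) < (1-c)/(2c)$, which rearranges once more to the key inequality.

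For part (3) I would differentiate $N/D$ by the quotient rule using $N'(r) = -2(1-c)$ and $D'(r) = -2c$. The feature to exploit is that the terms linear in $p+q-2r$ cancel in $N'D - ND'$, leaving the constant $2\big(c^2(p-q)^2-(1-c)^2\big)$; multiplying by $K$ reproduces the displayed expression for $\Phi_1'$. Its sign is then immediate, since the denominator is positive (the square is positive off the pole and $1-c>0$), $K>0$, and the numerator is negative by the key inequality. As $\Lambda_{p,q}$ contains no pole, $\Phi_1$ is therefore strictly decreasing there.

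Finally, for part (4) I would give a direct sign check rather than appeal to monotonicity: on $\Lambda_{p,q}$ one has $r < \min\{p,q\}$, so $p+q-2r > |p-q| > 0$, whence both $N(r) > 0$ and $D(r) > 0$, and together with $K>0$ this forces $\Phi_1(r) > 0$. (Alternatively, parts (2) and (3) already yield positivity, since a strictly decreasing function is positive to the left of its unique root.) There is no genuine obstacle here; the only things to get right are the recurring inequality $c^2(p-q)^2 < (1-c)^2$ and the bookkeeping of which endpoint of $\Lambda_{p,q}$ to compare against. I would state that inequality once and stress its strictness, since the openness of the conditions $p,q\in(0,1)$, $p\neq q$ together with $c\leq 1/2$ is exactly what makes the pole and root genuine and distinct and the monotonicity and positivity strict.
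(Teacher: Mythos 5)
Your proposal is correct and is precisely the elementary analysis the paper alludes to but omits: the single inequality $c^2(p-q)^2<(1-c)^2$ (from $|p-q|<1$ and $c\leq 1/2$) does drive all four parts, and your computations of the pole, root, derivative, and signs all check out, including the quotient-rule cancellation giving the constant numerator $2\big(c^2(p-q)^2-(1-c)^2\big)$. No gaps.
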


It is reassuring that the interval we are interested in, namely $\Lambda_{p,q}$, does not contain either the pole or the root of $\Phi_1$, as otherwise we would have a contradiction. Our second main result gives an explicit formula for the second moment.
\begin{thm} \label{gammarint2}
For $r \in  \Lambda_{p,q}$, we have
\[
\Bigg( \int_{F \times F} \lvert x - y \rvert^2  \, \text{d} \gamma_r(x,y) \Bigg)^{1/2} \ = \ \frac{t_2-t_1}{1-c} \sqrt{ \frac{2c(p-q)^2+(1-c)(p+q-2r)}{1+c}}.
\]
\end{thm}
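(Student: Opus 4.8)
The plan is to exploit the self-similarity of $\gamma_r$ directly, the key point being that the squared integrand is a genuine polynomial. Writing $w_{1,1}=r$, $w_{1,2}=p-r$, $w_{2,1}=q-r$, $w_{2,2}=1-p-q+r$ and $\tau = t_2-t_1$, the measure $\gamma_r$ satisfies
\[
\gamma_r \ = \ \sum_{i,j \in \{1,2\}} w_{i,j}\, \gamma_r \circ S_{i,j}^{-1},
\]
and since $S_{i,j}(x,y)=(cx+t_i,cy+t_j)$, the displacement transforms affinely: if $(x,y)=S_{i,j}(x',y')$ then $x-y=c(x'-y')+(t_i-t_j)$. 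Unlike the first moment of Theorem \ref{gammarint}, where the absolute value forces a case analysis on the sign of $x-y$, here $\lvert x-y\rvert^2=(x-y)^2$ expands without any such complication, which is what makes the recursion exact and linear.

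Setting $I_2=\int (x-y)^2\,\text{d}\gamma_r$ and substituting the affine formula into each piece of the self-similar decomposition, I would expand the square to obtain
\[
I_2 \ = \ c^2 I_2 \sum_{i,j} w_{i,j} \ + \ 2c\,M \sum_{i,j} w_{i,j}(t_i-t_j) \ + \ \sum_{i,j} w_{i,j}(t_i-t_j)^2,
\]
where $M=\int (x-y)\,\text{d}\gamma_r$ is the \emph{signed} first moment and I have used that integrating $(x'-y')^k$ against $\gamma_r\circ S_{i,j}^{-1}$ reduces to the same integral against the self-similar measure $\gamma_r$ itself. Since $\sum_{i,j} w_{i,j}=1$, the first term collapses to $c^2 I_2$, leaving a single linear equation for $I_2$ once $M$ and the two weighted sums are in hand.

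The one external ingredient is the signed first moment, which I would compute \emph{not} from Theorem \ref{gammarint} (which concerns $\lvert x-y\rvert$) but from the marginals: as $\gamma_r\in\Gamma(\mu_p,\mu_q)$ we have $M=\int x\,\text{d}\mu_p-\int y\,\text{d}\mu_q$, and the mean of $\mu_p$ follows from its own self-similar relation as $(t_2-p\tau)/(1-c)$, giving $M=(q-p)\tau/(1-c)$. The two weighted sums are elementary: the values $(t_i-t_j)$ equal $0,-\tau,\tau,0$ for $(i,j)=(1,1),(1,2),(2,1),(2,2)$, so $\sum_{i,j} w_{i,j}(t_i-t_j)=(q-p)\tau$ and $\sum_{i,j} w_{i,j}(t_i-t_j)^2=(p+q-2r)\tau^2$.

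Substituting these and solving the linear equation then yields
\[
I_2 \ = \ \frac{1}{1-c^2}\left( \frac{2c(q-p)^2\tau^2}{1-c} + (p+q-2r)\tau^2 \right) \ = \ \frac{\tau^2}{(1-c)^2}\cdot \frac{2c(p-q)^2+(1-c)(p+q-2r)}{1+c},
\]
and taking the positive square root, using $\tau=t_2-t_1>0$, gives exactly the claimed formula. The only mild subtlety — and the reason the second moment is in fact cleaner than the first — is that the recursion feeds back through the \emph{signed} moment $M$ rather than the absolute one; once $M$ is identified with a difference of marginal means, I expect no genuine obstacle beyond careful bookkeeping of the four weights.
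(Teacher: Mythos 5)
Your proof is correct, but it takes a genuinely shorter route than the paper's. Both arguments start from the same self-similar decomposition of $I_{r,2}$ into the four pieces $S_{i,j}(F\times F)$, which produces the linear recursion $I_{r,2}=c^2I_{r,2}+2cM\sum w_{i,j}(t_i-t_j)+\sum w_{i,j}(t_i-t_j)^2$ with $M=\int(x-y)\,\text{d}\gamma_r$. The difference is entirely in how $M$ is obtained. The paper feeds in Lemma \ref{techlem} --- whose proof requires the elaborate decomposition of $F\times F$ into the families $\underline{F\times F}_k$ and $\overline{F\times F}_k$ --- together with the already-established Theorem \ref{gammarint}, and then simplifies the resulting rational expression. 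You instead observe that the \emph{signed} first moment is determined by the marginals alone: $M=\int x\,\text{d}\mu_p-\int y\,\text{d}\mu_q=(q-p)(t_2-t_1)/(1-c)$, computable in two lines from the self-similar relation for $\mu_p$ (exactly as in the paper's Proposition \ref{lowerbound}). One can check the two expressions for $M$ agree: writing $s=p+q-2r$ and using $4(p-r)(q-r)=s^2-(p-q)^2$, the paper's formula collapses to $-(p-q)(t_2-t_1)/(1-c)$. Your version makes Theorem \ref{gammarint2} logically independent of Theorem \ref{gammarint} and of Lemma \ref{techlem}, and it makes precise the paper's remark in Section \ref{discussion} that ``the analogue of Lemma \ref{techlem} is trivial for even powers'': for the second moment the only feedback term is marginal-determined, so no coupling-specific information beyond the weights $w_{i,j}$ is needed. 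What the paper's machinery buys in exchange is that Lemma \ref{techlem} is genuinely required for the first (absolute) moment, where the sign of $x-y$ cannot be ignored, and your shortcut is specific to the second moment: for the fourth moment, say, one would need $\int(x-y)^3\,\text{d}\gamma_r$, which involves mixed moments not determined by the marginals. The only point worth stating explicitly in a write-up is that passing from $\gamma_r=\sum w_{i,j}\,\gamma_r\circ S_{i,j}^{-1}$ to $\int f\,\text{d}\gamma_r=\sum w_{i,j}\int f\circ S_{i,j}\,\text{d}\gamma_r$ is just the pushforward identity, so your argument does not even need the measure-disjointness condition (\ref{disjointcond}) that the paper invokes.
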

We will prove Theorem \ref{gammarint2} in Section \ref{gammarintproof2}.  For fixed $p,q, t_1, t_2$ and $c$, consider the partial function $\Phi_2$ on $\mathbb{R}$ defined by
\[
\Phi_2(r) \ = \ \frac{t_2-t_1}{1-c} \sqrt{ \frac{2c(p-q)^2+(1-c)(p+q-2r)}{1+c}},
\]
wherever the positive square root exists.  Elementary analysis yields the following basic facts.

\begin{prop} \label{basicexp2}
For fixed $p,q, t_1, t_2, c$ we have that
\begin{itemize}
\item[(1)] $\Phi_2$ is the square root of a linear function.
\item[(2)] $\Phi_2$ has one root at
\[
 \frac{p+q}{2} + \frac{c(p-q)^2}{(1-c)}
\]
lying strictly to the right of the interval $\Lambda_{p,q}$.  Moreover, it is defined to the left of this root and undefined to the right so, in particular, its domain includes $\Lambda_{p,q}$.
\item[(3)] $\Phi_2^2$ is differentiable at all $r \in \mathbb{R}$ with 
\[
(\Phi_2^2)'(r) \ = \ \frac{-2(t_2-t_1)^2}{1-c^2} \ < \ 0
\]
and so $\Phi_2$ is strictly decreasing on the interval $\Lambda_{p,q}$.
\item[(4)] $\Phi_2$ is strictly positive on the interval $\Lambda_{p,q}$.
\end{itemize}
\end{prop}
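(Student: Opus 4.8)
The plan is to reduce all four assertions to the elementary behaviour of the quantity under the square root, viewed as a function of $r$. Writing
\[
\Phi_2(r)^2 \ = \ \frac{(t_2-t_1)^2}{(1-c)^2}\cdot\frac{2c(p-q)^2+(1-c)(p+q-2r)}{1+c},
\]
I would first note that the prefactor $(t_2-t_1)^2/(1-c)^2$ and the factor $1/(1+c)$ are strictly positive constants, using the standing hypotheses $t_2\geq t_1+c$ (so $t_2>t_1$) and $c\in(0,1/2]$ (so $0<c<1$), while the numerator $2c(p-q)^2+(1-c)(p+q-2r)$ is a degree-one polynomial in $r$ with slope $-2(1-c)<0$. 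This gives (1) at once: $\Phi_2$ is the square root of a linear function of $r$, and that linear function is strictly decreasing.

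For (2) I would set the radicand equal to zero and solve the resulting linear equation, which yields the stated root $\tfrac{p+q}{2}+\tfrac{c(p-q)^2}{1-c}$. To place it relative to $\Lambda_{p,q}=(\max\{0,p+q-1\},\min\{p,q\})$, the key computation is
\[
\frac{p+q}{2}+\frac{c(p-q)^2}{1-c}-\min\{p,q\} \ = \ \frac{\lvert p-q\rvert}{2}+\frac{c(p-q)^2}{1-c} \ > \ 0,
\]
where I use $\tfrac{p+q}{2}-\min\{p,q\}=\tfrac12\lvert p-q\rvert$ together with $p\neq q$ and $0<c<1$ to conclude strict positivity of the second term. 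Thus the root lies strictly to the right of the right endpoint of $\Lambda_{p,q}$. Since the radicand is affine with negative slope it is positive below its root and negative above it, so $\Phi_2$ is defined precisely for those $r$ not exceeding the root; in particular $\Lambda_{p,q}$ lies in the domain.

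For (3) and (4) I would work through $\Phi_2^2$, which is differentiable on all of $\mathbb{R}$ because it is linear. Differentiating the first display gives $(\Phi_2^2)'(r)=\tfrac{(t_2-t_1)^2}{(1-c)^2}\cdot\tfrac{-2(1-c)}{1+c}=\tfrac{-2(t_2-t_1)^2}{1-c^2}$, which is strictly negative by the same sign observations. Strict monotonicity of $\Phi_2$ itself on $\Lambda_{p,q}$ then follows by composing with the strictly increasing map $t\mapsto\sqrt t$: for $r_1<r_2$ in $\Lambda_{p,q}$ one has $\Phi_2^2(r_1)>\Phi_2^2(r_2)>0$, hence $\Phi_2(r_1)=\sqrt{\Phi_2^2(r_1)}>\sqrt{\Phi_2^2(r_2)}=\Phi_2(r_2)$. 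Claim (4) is then immediate, since every $r\in\Lambda_{p,q}$ satisfies $r<\min\{p,q\}<\tfrac{p+q}{2}+\tfrac{c(p-q)^2}{1-c}$, so both the radicand and $\Phi_2(r)$ are strictly positive.

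The calculations are entirely routine, and the only step needing a little care is the root-location inequality in (2); even that collapses to the single comparison above once the standing hypotheses $p\neq q$, $t_2>t_1$ and $0<c<1$ are recalled. My main discipline throughout would be to invoke these hypotheses explicitly wherever a strict inequality or a nonzero constant is asserted, and to argue the monotonicity of $\Phi_2$ through the polynomial $\Phi_2^2$ rather than by differentiating $\Phi_2$ directly, since $\Phi_2$ is not differentiable at its root.
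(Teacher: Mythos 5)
Your proof is correct and proceeds by exactly the ``elementary analysis'' the paper alludes to (the paper gives no explicit proof of this proposition): all the computations check out, including the root location $\tfrac{p+q}{2}-\min\{p,q\}=\tfrac12\lvert p-q\rvert>0$ and the sign bookkeeping using $t_2\geq t_1+c>t_1$ and $c\in(0,1)$. Your care in arguing monotonicity through the affine function $\Phi_2^2$ rather than differentiating $\Phi_2$ directly is exactly right.
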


\begin{figure}[H]
	\centering
	\includegraphics[width=155mm]{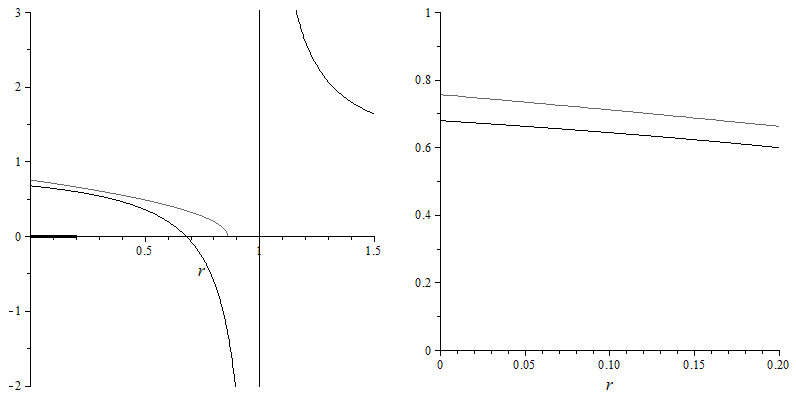}
\caption{A plot of $\Phi_1(r)$ (black) and $\Phi_2(r)$ (grey) for $c=0.5, \, t_1 = 0, \, t_2 = 0.5, \, p= 0.2$ and $q=0.8$, which is representative of the typical shapes.  The interval $\Lambda_{p,q} = (0,0.2)$ is shown in bold on the left and on the right we show $\Phi_1(r)$ and $\Phi_2(r)$ restricted to this interval.}
\end{figure}

\subsection{Applications to Wasserstein distances}

Taking the infima of our formulae for (\ref{keyintegral}) yields natural upper bounds for the 1st and 2nd Wasserstein distances.  It is not, however, \emph{a priori} obvious that the actual values can be realised by self-similar couplings, as there are many other couplings of $\mu_p$ and $\mu_q$ other than the self-similar measures $\gamma_r$.  However, it turns out that this class of measures is enough to realise the 1st Wasserstein distance.  To compute the lower bound, we employ the Kantorovich-Rubinstein duality theorem which gives a useful reformulation of the 1st Wasserstein metric in terms of $1$-Lipschitz functions on $X$.  It states that, for $\mu, \nu \in \mathcal{P}(X)$,
\[
W_1(\mu, \nu) \ = \ \sup \bigg\{ \int_X \phi(x)  \, \text{d}(\mu-\nu) (x) \ : \ \phi: X \to \mathbb{R} \text{ and } \text{Lip}(\phi) \leq 1 \bigg\},
\]
where $\text{Lip}(\phi)$ is the Lipschitz constant of $\phi$.  We will use this to estimate $W_1(\mu_p,\mu_q) $ using $1$-Lipschitz functions $\phi:  F \to \mathbb{R}$ of the form $\phi(x) = \lambda x$ for $\lambda \in [-1,1]$.  
\begin{prop} \label{lowerbound}
For all $p,q, t_1, t_2, c, \lambda$,
\[
 \int_{F } \lambda x \, \text{\emph{d}}(\mu_p-\mu_q)(x)  \ = \ \frac{\lambda (p-q)(t_1  - t_2 ) }{1-c}.
\]
\end{prop}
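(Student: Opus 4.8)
The plan is to reduce the whole statement to computing the first moment (mean) of a single self-similar measure. Write $m(p) = \int_F x \, \mathrm{d}\mu_p(x)$, so that by linearity the left-hand side equals $\lambda\big(m(p) - m(q)\big)$. Thus it suffices to find a closed form for $m(p)$ as a function of $p$ (and the fixed parameters $c, t_1, t_2$), then substitute and subtract.

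To compute $m(p)$, I would exploit the defining self-similarity relation $\mu_p = p\,\mu_p \circ S_1^{-1} + (1-p)\,\mu_p \circ S_2^{-1}$. Integrating the identity function against both sides and applying the change-of-variables formula $\int_F x \, \mathrm{d}(\mu_p \circ S_i^{-1})(x) = \int_F S_i(y)\,\mathrm{d}\mu_p(y)$ turns the moment of the pushed-forward measure into the moment of $\mu_p$ of the affine map $S_i$. Since $S_i(y) = cy + t_i$, each such integral becomes $c\,m(p) + t_i$, and the self-similarity relation collapses to the self-referential linear equation
\[
m(p) \ = \ p\big(c\,m(p) + t_1\big) + (1-p)\big(c\,m(p) + t_2\big) \ = \ c\,m(p) + \big(t_2 + p(t_1 - t_2)\big).
\]
Solving this fixed-point equation (legitimate since $c < 1$ forces $1 - c \neq 0$) gives $m(p) = \big(t_2 + p(t_1 - t_2)\big)/(1-c)$.

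The conclusion is then immediate: forming $m(p) - m(q)$, the constant $t_2$-terms cancel and the $(t_1 - t_2)$ factors out, leaving $(p-q)(t_1 - t_2)/(1-c)$, which multiplied by $\lambda$ yields the claimed formula. There is no real obstacle here beyond bookkeeping; the only point requiring a word of justification is the change-of-variables step together with the recognition that the self-similarity relation produces a \emph{fixed-point} equation for the mean rather than an explicit evaluation. I would note in passing that the compactness of $F \subseteq [0,1]$ guarantees all integrals are finite, so exchanging summation and integration and solving for $m(p)$ are fully rigorous.
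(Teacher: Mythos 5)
Your proof is correct and takes essentially the same approach as the paper: both derive the fixed-point equation $m(p) = c\,m(p) + p t_1 + (1-p)t_2$ for the first moment from the self-similarity relation via change of variables, solve it using $1-c \neq 0$, and subtract the two moments. The only cosmetic difference is that you factor out $\lambda$ at the outset rather than carrying it through the computation.
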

We will prove Proposition \ref{lowerbound} in Section \ref{lowerboundproof}. Combined with Theorem \ref{gammarint}, this allows the precise computation of the 1st Wasserstein distance.
\begin{cor} \label{main}
For all $p,q, t_1, t_2, c$,
\[
W_1(\mu_p, \mu_q)  \ =  \  \frac{t_2-t_1}{1-c} \, \lvert p-q \rvert.
\]
\end{cor}
\begin{proof}
This follows easily since
\[
\sup_{\lambda \in [-1,1]} \frac{\lambda (p-q)(t_1  - t_2 ) }{1-c}  \ \leq \ W_1(\mu_p, \mu_q)  \ \leq \ \inf_{r \in \Lambda_r} \Phi_1(r) \ = \ \Phi_1 \big(\min\{p,q\} \big)
\]
by the Kantorovich-Rubinstein duality theorem, Theorem \ref{gammarint}, Proposition \ref{basicexp2} (3) and the definition of $W_1$.
\end{proof}
An immediate consequence of Theorem \ref{gammarint} and Corollary \ref{main} is that the 1st Wasserstein distance between $\mu_p$ and $\mu_q$ is realised by the self-similar coupling associated with $r= \min\{p, q\}$.  Although this measure is not strictly in our class because it renders one of the probabilities equal to zero, it can be extracted by weak convergence.  Interestingly, this `realising measure' does not have full support $F \times F$.  We can also derive non-trivial upper and lower bounds for the 2nd Wasserstein distance.

\begin{cor} \label{upperbound2}
For all $p,q, t_1, t_2, c$,
\[
\frac{t_2-t_1}{1-c} \, \lvert p-q \rvert \ \leq \ W_2(\mu_p, \mu_q)  \ \leq \ \frac{t_2-t_1}{1-c} \sqrt{ \frac{2c(p-q)^2+(1-c)\lvert p-q\rvert}{1+c}}.
\]
\end{cor}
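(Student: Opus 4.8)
The plan is to establish the two inequalities separately; in both cases the work has essentially been done by the earlier results, so the proof amounts to assembling them correctly. For the lower bound I would invoke the monotonicity of the Wasserstein distances in the exponent, and for the upper bound I would simply observe that each self-similar coupling $\gamma_r$ is an admissible competitor in the infimum defining $W_2$.

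For the lower bound, recall the general fact noted in the introduction that $W_{\rho_1} \leq W_{\rho_2}$ whenever $\rho_1 \leq \rho_2$, which is an immediate consequence of H\"older's inequality applied to the integrand $\lvert x-y\rvert^\rho$ against any fixed coupling. Applying this with $\rho_1 = 1$ and $\rho_2 = 2$ gives $W_1(\mu_p,\mu_q) \leq W_2(\mu_p,\mu_q)$, and Corollary \ref{main} then supplies the exact value $W_1(\mu_p,\mu_q) = \frac{t_2-t_1}{1-c}\lvert p-q\rvert$, which is precisely the claimed lower bound.

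For the upper bound, I would argue directly from the definition
\[
W_2(\mu_p,\mu_q) \ = \ \inf_{\gamma \in \Gamma(\mu_p,\mu_q)} \Bigg( \int_{F\times F} \lvert x-y\rvert^2 \, \text{d}\gamma \Bigg)^{1/2}.
\]
Since each $\gamma_r$ with $r \in \Lambda_{p,q}$ lies in $\Gamma(\mu_p,\mu_q)$, Theorem \ref{gammarint2} gives $W_2(\mu_p,\mu_q) \leq \Phi_2(r)$ for every such $r$, and hence $W_2(\mu_p,\mu_q) \leq \inf_{r \in \Lambda_{p,q}} \Phi_2(r)$. By Proposition \ref{basicexp2} (3) the function $\Phi_2$ is strictly decreasing on $\Lambda_{p,q}$, so this infimum is approached as $r$ tends to the right endpoint $\min\{p,q\}$ of $\Lambda_{p,q}$. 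Because $\Phi_2$ extends continuously to $r = \min\{p,q\}$ (the argument of the square root remains positive there, by Proposition \ref{basicexp2} (2)), the infimum equals $\Phi_2(\min\{p,q\})$, and substituting the identity $p+q-2\min\{p,q\} = \lvert p-q\rvert$ into the formula of Theorem \ref{gammarint2} yields exactly the claimed upper bound.

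The argument is routine given the earlier results, so I do not expect a serious obstacle; the hard work is entirely contained in Theorem \ref{gammarint2} and Corollary \ref{main}. The only two points that warrant a little care are the passage to the open endpoint of $\Lambda_{p,q}$ — which I would justify by continuity of $\Phi_2$ rather than by exhibiting a coupling strictly inside the class, although one could equally extract the limiting coupling by weak convergence as remarked after Corollary \ref{main} — and the elementary identity $p+q-2\min\{p,q\} = \lvert p-q\rvert$ that produces the precise form of the bound.
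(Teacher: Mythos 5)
Your proposal is correct and follows essentially the same route as the paper: the lower bound via $W_1 \leq W_2$ (H\"older) together with Corollary \ref{main}, and the upper bound via $W_2 \leq \inf_{r \in \Lambda_{p,q}} \Phi_2(r) = \Phi_2(\min\{p,q\})$ using the monotonicity from Proposition \ref{basicexp2}~(3). You merely spell out two details the paper leaves implicit, namely the continuous extension of $\Phi_2$ to the endpoint $r=\min\{p,q\}$ and the identity $p+q-2\min\{p,q\}=\lvert p-q\rvert$.
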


\begin{proof}
This follows easily since
\[
W_1(\mu_p, \mu_q) \ \leq \ W_2(\mu_p, \mu_q)  \ \leq \ \inf_{r \in \Lambda_r} \Phi_2(r) \ = \ \Phi_2 \big(\min\{p,q\} \big)
\]
by H\"older's inequality, Proposition \ref{basicexp2} (3) and the definition of $W_2$.
\end{proof}

\begin{rem}
It has recently been drawn to our attention by Tapio Rajala that our formula for $W_1(\mu_p, \mu_q)$, given in Corollary \ref{main}, can also be obtained by a simpler method involving \emph{monotone rearrangement}.  However, this technique also encounters difficulties when there are more than one map, the ambient space has dimension greater than one, or higher moments are considered.  See Section \ref{discussion} for more details. We reiterate that the main interest of this paper is the explicit derivation of the 1st and 2nd moment integrals for the family of self-similar couplings and not the formula for $W_1$.
\end{rem}

\subsection{Further analysis and examples}

In this section we use Theorem \ref{main} to illustrate a strange phenomenon regarding how $W_1(\mu_p, \mu_q)$ depends on the contraction parameter $c$.  Fix $p,q \in (0,1)$ with $p \neq q$ and consider the following two `extremal settings'.  Firstly, if $t_1 = 0$ and $t_2=1-c$, then we are in the setting of the classical middle $(1-2c)$ Cantor set.  In this case, the construction intervals are kept as far apart as possible.  Secondly, if $t_1$ and $t_2$ are chosen such that $t_2-t_1 = c$, then the construction intervals are as close together as possible.  Strikingly, in the first setting the formula for the 1st Wasserstein distance reduces to
\[
W_1(\mu_p, \mu_q) \ = \ \lvert p-q \rvert
\]
which does not depend on $c$.  The physical interpretation of this is that it takes the same amount of `work' to transform a $(p,1-p)$ self-similar measure on the whole interval $(c=1/2)$ into a $(q,1-q)$ self-similar measure on the whole interval as it takes to transform a $(p,1-p)$ self-similar measure on the middle third Cantor set $(c=1/3)$ into a $(q,1-q)$ on the middle third Cantor set.  At first sight this may seem counter intuitive, but the heuristic explanation is as follows.  Our formula shows that $W_1(\mu_p, \mu_q)$ increases in $(t_2-t_1)$ and in $c$.  Choosing $t_2-t_1 = 1-c$ means that we decrease $t_2-t_1$ at the precise rate required to cancel out the effect of increasing $c$. However, it is rather neat that this precise rate puts us exactly in the case of the middle $(1-2c)$ Cantor set, which is in some sense our most natural example.  Interestingly, in this case our upper bound for the 2nd Wasserstein distance reduces to
\[
\sqrt{ \frac{2c(p-q)^2+(1-c)\lvert p-q\rvert}{1+c}}
\]
which retains dependence on $c$.  Indeed, our formula's dependence on $c$ cannot be removed by any choice of $t_1$ and $t_2$ as functions of the other data.

\begin{figure}[H]
	\centering
	\includegraphics[width=160mm]{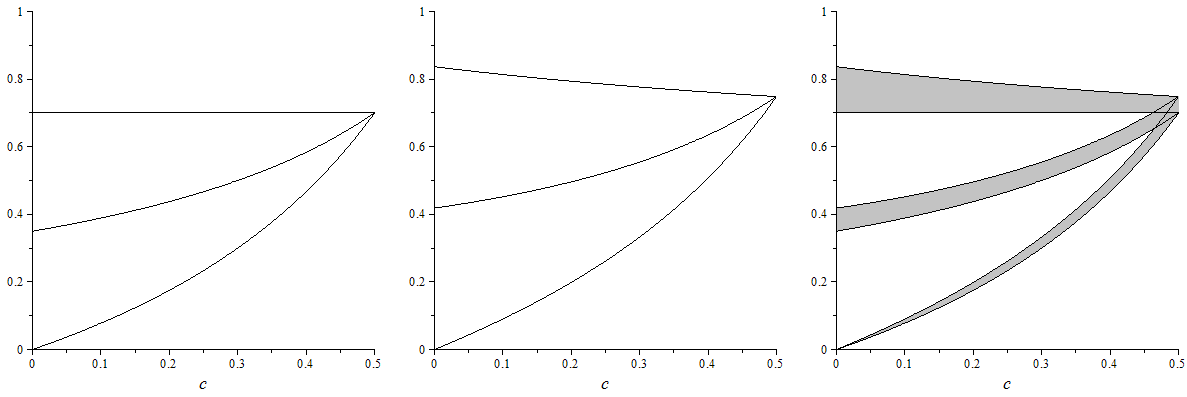}
\caption{Left: A plot of $W_1(\mu_p, \mu_q)$ as a function of $c$ for three different choices of $t_1$ and $t_2$.  In each case, $p=0.2$ and $q = 0.9$.  The maximal graph corresponds to the middle $(1-2c)$ Cantor set case, where there is no $c$ dependence; the minimal graph corresponds to the $t_2-t_1 = c$ case, where the dependence on $c$ is most pronounced; and the middle graph is an intermediate example where $t_1=0$ and $t_2 = 1/2$.  Middle: a plot of the corresponding second moment integral, which gives a non-trivial upper bound on $W_2(\mu_p, \mu_q)$.  Again, the maximal graph corresponds to the middle $(1-2c)$ Cantor set case and the minimal graph corresponds to the $t_2-t_1 = c$ case.  Right: a plot showing the error in the bounds on $W_2(\mu_p, \mu_q)$ given by Corollary \ref{upperbound2} in each of the three cases.}
\end{figure}

Observe that the three measures in question are the same when $c=1/2$ and consequently the three graphs coincide at that point in both cases.  In the $t_2-t_1 = c$ case, $W_1(\mu_p, \mu_q)$ and $W_2(\mu_p, \mu_q)$ converge to 0 as $c$ converges to 0.  This is because if $c$ were chosen to be zero, both $\mu_p$ and $\mu_q$ would be unit point masses at $t_1=t_2$.

\section{Proofs}

\subsection{A useful technical lemma} \label{keyint}

In this section we prove an important technical lemma, which holds the key to proving Theorem \ref{gammarint} and Theorem \ref{gammarint2}.

\begin{lma} \label{techlem}
We have
\[
\int_{F \times F} (x - y)  \, \text{d}\gamma_r(x,y) \ = \ \frac{4(t_2-t_1)(q-r)(p-r)}{(p-q)\big(1-c+c(p+q-2r)\big)} - \bigg( \frac{p+q-2r}{p-q} \bigg) \int_{F \times F} \lvert x - y \rvert  \, \text{d} \gamma_r(x,y) .
\]
\end{lma}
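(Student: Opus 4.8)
The plan is to exploit the self-similarity of $\gamma_r$ and to split the integrand $|x-y|$ into its positive and negative parts. Write $I = \int_{F\times F}(x-y)\,\text{d}\gamma_r$, $J_+ = \int_{F\times F}\max\{x-y,0\}\,\text{d}\gamma_r$ and $J_- = \int_{F\times F}\max\{y-x,0\}\,\text{d}\gamma_r$, so that $\int_{F\times F}|x-y|\,\text{d}\gamma_r = J_+ + J_-$ and $I = J_+ - J_-$. Since the asserted identity only involves $I$ and $\int_{F\times F}|x-y|\,\text{d}\gamma_r$, it will suffice to produce one equation for each of $J_+$ and $J_-$ and then eliminate the auxiliary quantity $I$. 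The engine throughout is the self-similarity relation $\gamma_r = r\,\gamma_r\circ S_{1,1}^{-1} + (p-r)\,\gamma_r\circ S_{1,2}^{-1} + (q-r)\,\gamma_r\circ S_{2,1}^{-1} + (1-p-q+r)\,\gamma_r\circ S_{2,2}^{-1}$, which gives, for any $\gamma_r$-integrable $g$,
\[
\int g\,\text{d}\gamma_r \ = \ r\!\int g\circ S_{1,1}\,\text{d}\gamma_r + (p-r)\!\int g\circ S_{1,2}\,\text{d}\gamma_r + (q-r)\!\int g\circ S_{2,1}\,\text{d}\gamma_r + (1-p-q+r)\!\int g\circ S_{2,2}\,\text{d}\gamma_r.
\]

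I would apply this with $g = \max\{x-y,0\}$. On the two diagonal cells one has $g\circ S_{1,1} = g\circ S_{2,2} = c\max\{x-y,0\}$, while on the off-diagonal cells $g\circ S_{1,2} = \max\{c(x-y)-(t_2-t_1),0\}$ and $g\circ S_{2,1} = \max\{c(x-y)+(t_2-t_1),0\}$. The crux of the argument — and the step I expect to be the main obstacle — is to remove these two absolute values by a sign analysis. Since the extreme points of $F$ are the fixed points $t_1/(1-c)$ of $S_1$ and $t_2/(1-c)$ of $S_2$, we have $\mathrm{diam}\,F = (t_2-t_1)/(1-c)$, so $|x-y|\le (t_2-t_1)/(1-c)$ for all $(x,y)\in F\times F$; the hypothesis $c\le 1/2$ then gives $c\,|x-y|\le t_2-t_1$, whence $c(x-y)-(t_2-t_1)\le 0$ and $c(x-y)+(t_2-t_1)\ge 0$ throughout $F\times F$. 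Consequently $g\circ S_{1,2}\equiv 0$ while $g\circ S_{2,1} = c(x-y)+(t_2-t_1)$, and integrating yields
\[
J_+\big(1-c+c(p+q-2r)\big) \ = \ (q-r)\big(cI+(t_2-t_1)\big).
\]
Running the identical computation with $g = \max\{y-x,0\}$ interchanges the roles of the two off-diagonal cells and produces the symmetric relation $J_-\big(1-c+c(p+q-2r)\big) = (p-r)\big((t_2-t_1)-cI\big)$.

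It then remains to assemble the result. Eliminating the common factor $cI$ between the two displayed relations (note $p-r>0$ and $q-r>0$ since $r<\min\{p,q\}$) gives $(p-r)J_+ + (q-r)J_- = 2(t_2-t_1)(p-r)(q-r)/\big(1-c+c(p+q-2r)\big)$, and this is precisely where the factor $4(t_2-t_1)(q-r)(p-r)$ of the statement originates. Substituting $J_+ = \tfrac12\big(\int_{F\times F}|x-y|\,\text{d}\gamma_r + I\big)$ and $J_- = \tfrac12\big(\int_{F\times F}|x-y|\,\text{d}\gamma_r - I\big)$, collecting the coefficients of $I$ and of $\int_{F\times F}|x-y|\,\text{d}\gamma_r$, and dividing through by $p-q$ then rearranges the relation into the claimed form. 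Beyond the sign analysis, the only subtleties are the harmless measure-zero behaviour on the diagonal $\{x=y\}$ and at the single corner where the inequalities become tight when $c=1/2$, neither of which affects the integrals; the remaining manipulations are routine algebra.
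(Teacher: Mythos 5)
Your proof is correct, and it is a genuinely different argument from the one in the paper. The paper proves the lemma by decomposing the off-diagonal part of $F \times F$ into countably many pieces $\underline{F \times F}_k$ and $\overline{F \times F}_k$ (indexed by how long the symbolic coding stays on the diagonal cells before entering $(2,1)$ or $(1,2)$), relating each pair of pieces by an explicit translation by $(t_2-t_1)c^k$ in both coordinates, invoking the measure-disjointness of these pieces, and summing a geometric series. You instead apply the invariance identity $\int g \, \text{d}\gamma_r = r\int g\circ S_{1,1}\, \text{d}\gamma_r + (p-r)\int g\circ S_{1,2}\, \text{d}\gamma_r + (q-r)\int g\circ S_{2,1}\, \text{d}\gamma_r + (1-p-q+r)\int g\circ S_{2,2}\, \text{d}\gamma_r$ directly to the positive and negative parts $g = (x-y)^{\pm}$, and resolve the resulting maxima by the sign analysis $c\,\mathrm{diam}(F) = c(t_2-t_1)/(1-c) \le t_2-t_1$, which is exactly where the standing hypothesis $c \le 1/2$ enters (this is the same geometric input --- the ordering of the cells $S_1(F)$ and $S_2(F)$ --- that the paper uses later, in the proof of Theorem \ref{gammarint}, rather than in the lemma itself). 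Your two linear relations for $J_+$ and $J_-$ then yield the identity by pure algebra; I verified the elimination and it reproduces the lemma exactly. Two remarks on what each method buys. First, your route avoids both the countable decomposition and the disjointness condition (\ref{disjointcond0}): the invariance identity holds for any self-similar measure regardless of separation, so the only place separation-type hypotheses enter your proof is the sign analysis. Second, your approach is actually stronger than the lemma: the two relations $J_+\big(1-c+c(p+q-2r)\big) = (q-r)\big(cI+(t_2-t_1)\big)$ and $J_-\big(1-c+c(p+q-2r)\big) = (p-r)\big((t_2-t_1)-cI\big)$ are two independent linear equations in the two unknowns $J_+, J_-$ (the relevant determinant is $(1-c)\big(1-c+c(p+q-2r)\big) \neq 0$), so solving them outright determines $J_+ + J_-$ as well, i.e. it proves Theorem \ref{gammarint} in the same stroke, without the separate self-similarity decomposition carried out in Section \ref{gammarintproof}.
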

\begin{proof}
 Let  $\mathcal{I} = \{(i,j): i,j \in \{1,2\}\}$ and $ \mathcal{I}^k$ be the set of strings over $\mathcal{I}$ of length $k$.  We write $\textbf{\emph{i}} = (\textbf{\emph{i}}_1, \textbf{\emph{i}}_2, \dots , \textbf{\emph{i}}_k) = ((i_1, j_1), (i_2,j_2), \dots, (i_k, j_k) ) \in \mathcal{I}^k$ and for such $\textbf{\emph{i}} \in \mathcal{I}^k$ write
\[
S_{\textbf{\emph{i}}} = S_{(i_1,j_1)} \circ \cdots \circ S_{(i_k,j_k)}.
\]
Also, for $k = 0, 1,2, \dots$ write
\[
\underline{\mathcal{I}}_{k} \ = \  \big\{ \textbf{\emph{i}} \in \mathcal{I}^{k+1} : (i_1, j_1), \dots, (i_{k}, j_{k}) \in \{(1,1),(2,2)\} \text{ and } (i_{k+1}, j_{k+1}) = (2,1) \big\} ,
\]
\[
\overline{\mathcal{I}}_k \ = \  \big\{ \textbf{\emph{i}} \in \mathcal{I}^{k+1} : (i_1, j_1), \dots, (i_{k}, j_{k}) \in \{(1,1),(2,2)\} \text{ and } (i_{k+1}, j_{k+1}) = (1,2) \big\},
\]
\[
\underline{F \times F}_k \ = \  \bigcup_{\textbf{\emph{i}} \in \underline{\mathcal{I}}_{k}} S_\textbf{\emph{i}}(F \times F),
\]
\[
\overline{F \times F}_k \ = \  \bigcup_{\textbf{\emph{i}} \in \overline{\mathcal{I}}_{k}} S_\textbf{\emph{i}}(F \times F),
\]
\[
\underline{F \times F} \ = \   \bigcup_{k=0}^{\infty}  \underline{F \times F}_k
\]
and
\[
\overline{F \times F} \ = \    \bigcup_{k=0}^{\infty}  \overline{F \times F}_k.
\]
We form this decomposition for the following reason.  We wish to relate the integral of $\lvert x-y \rvert$ over $\underline{F \times F}$ to the integral over $\overline{F \times F}$ but there is not a straightforward way to transform one integral into the other.  However, there is a natural way to transform the integral over $\underline{F \times F}_k$ to the integral over $\overline{F \times F}_k$, which we now demonstrate.

\begin{figure}[H]
	\centering
	\includegraphics[width=162mm]{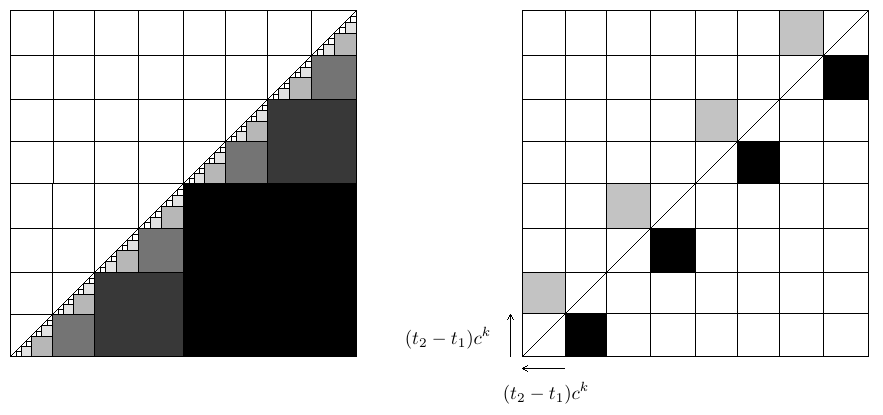}
\caption{Left: a picture indicating the regions in which the $\underline{F \times F}_k$ reside.  The region for $\underline{F \times F}_0$ is shown in black and the regions for $\underline{F \times F}_k$ (which are each the union of $2^k$ squares of side length $c^{k+1}$) then get lighter as $k$ increases.  Right: An indication of how the integral over $\underline{F \times F}_k$ (black) can be transformed into the integral over $\overline{F \times F}_k$ (grey) by shifting the first coordinate left and the second coordinate up, both by $(t_2-t_1)c^k$.  In both pictures $c=1/2$.}
\end{figure}

First note that
\[
\frac{\gamma_r\big(\underline{F \times F}_k\big)}{\gamma_r\big(\overline{F \times F}_k\big)}  \ = \  \frac{(1-p-q+2r)^k(q-r)}{(1-p-q+2r)^k(p-r)}  \ = \  \frac{q-r}{p-r}
\]
for all $k$.  Using this and the substitution indicated in the above figure yields
\begin{eqnarray*}
\int_{\underline{F \times F}_k} \lvert x - y \rvert  \, \text{d}\gamma_r(x,y) &=& \int_{\underline{F \times F}_k} ( x - y )  \, \text{d}\gamma_r(x,y) \\ \\
&=& \frac{q-r}{p-r}   \int_{\overline{F \times F}_k} ( x+(t_2-t_1)c^k )- (y-(t_2-t_1)c^k )  \, \text{d}\gamma_r(x,y) \\ \\
&=& \frac{q-r}{p-r}   \int_{\overline{F \times F}_k} (x-y)  \, \text{d}\gamma_r(x,y) \ + \ \frac{q-r}{p-r}   2(t_2-t_1) c^k \int_{\overline{F \times F}_k}  \text{d}\gamma_r(x,y)  \\ \\
&=& - \frac{q-r}{p-r}   \int_{\overline{F \times F}_k} \lvert x-y \rvert  \, \text{d}\gamma_r(x,y) \ + \ 2 \frac{q-r}{p-r}   (t_2-t_1) c^k \gamma_r\big(\overline{F \times F}_k\big)  \\ \\
&=& 2 \frac{q-r}{p-r}   (t_2-t_1) c^k (1-p-q+2r)^k(p-r)\  -  \ \frac{q-r}{p-r}   \int_{\overline{F \times F}_k} \lvert x-y \rvert  \, \text{d}\gamma_r(x,y)  \\ \\
&=&  2(t_2-t_1)(q-r) \big(c (1-p-q+2r) \big)^k \  -  \ \frac{q-r}{p-r}   \int_{\overline{F \times F}_k} \lvert x-y \rvert  \, \text{d}\gamma_r(x,y).
\end{eqnarray*}
Observe that
\begin{equation} \label{disjointcond0}
\gamma_r\big(\underline{F \times F}_k \cap \underline{F \times F}_{k'}\big) = \gamma_r\big(\overline{F \times F}_k \cap \overline{F \times F}_{k'}\big)   = 0 
\end{equation}
for $k \neq k'$ and so summing over $k$ gives
\begin{eqnarray}
\int_{\underline{F \times F}} \lvert x - y \rvert  \, \text{d}\gamma_r(x,y) &=& \sum_{k=0}^\infty  \int_{\underline{F \times F}_k} \lvert x - y \rvert  \, \text{d}\gamma_r(x,y)  \nonumber \\ \nonumber \\
&=& 2(t_2-t_1)(q-r) \sum_{k=0}^\infty    \big(c (1-p-q+2r) \big)^k \  -  \ \frac{q-r}{p-r} \sum_{k=0}^\infty     \int_{\overline{F \times F}_k} \lvert x-y \rvert  \, \text{d}\gamma_r(x,y) \nonumber \\ \nonumber \\
&=&  \frac{2(t_2-t_1)(q-r)}{1-c (1-p-q+2r)} \  -  \   \frac{q-r}{p-r}   \int_{\overline{F \times F}} \lvert x-y \rvert  \, \text{d}\gamma_r(x,y). \label{est1}
\end{eqnarray}
Applying (\ref{est1}) we get
\begin{eqnarray*}
\int_{F \times F} \lvert x - y \rvert  \, \text{d} \gamma_r(x,y)  & = &   \int_{\underline{F \times F}} \lvert x - y \rvert  \, \text{d}\gamma_r(x,y) \ + \ \int_{\overline{F \times F}} \lvert x - y \rvert  \, \text{d}\gamma_r(x,y) \\ \\
 & = &  \frac{2(t_2-t_1)(q-r)}{1-c (1-p-q+2r)} \ + \ \bigg(1- \frac{q-r}{p-r} \bigg) \int_{\overline{F \times F}} \lvert x - y \rvert  \, \text{d}\gamma_r(x,y)
\end{eqnarray*}
and so
\begin{equation} \label{est2}
\int_{\overline{F \times F}} \lvert x - y \rvert  \, \text{d}\gamma_r(x,y) \ = \  \frac{\int_{F \times F} \lvert x - y \rvert  \, \text{d} \gamma_r(x,y)   -   \frac{2(t_2-t_1)(q-r)}{1-c (1-p-q+2r)}}{1- \frac{q-r}{p-r} }.
\end{equation}

Finally,
\begin{eqnarray*}
\int_{F \times F} (x-y)  \, \text{d}\gamma_r(x,y) &=& \int_{\underline{F \times F}} \lvert x-y \rvert  \, \text{d}\gamma_r(x,y) \ - \ \int_{\overline{F \times F}} \lvert x-y \rvert  \, \text{d}\gamma_r(x,y) \\ \\
&=& \frac{2(t_2-t_1)(q-r)}{1-c (1-p-q+2r)} \ - \ \bigg(1+ \frac{q-r}{p-r} \bigg) \int_{\overline{F \times F}} \lvert x - y \rvert  \, \text{d}\gamma_r(x,y) \qquad \text{by (\ref{est1})} \\ \\
&=& \frac{2(t_2-t_1)(q-r)}{1-c (1-p-q+2r)} \\ \\
&\,&  \  \ - \  \frac{p+q-2r}{p-q}\bigg( \int_{F \times F} \lvert x - y \rvert  \, \text{d} \gamma_r(x,y)   -   \frac{2(t_2-t_1)(q-r)}{1-c (1-p-q+2r)} \bigg)  \qquad \text{by (\ref{est2})} \\ \\
&=& \frac{4(t_2-t_1)(q-r)(p-r)}{(p-q)\big(1-c+c(p+q-2r)\big)} - \bigg( \frac{p+q-2r}{p-q} \bigg) \int_{F \times F} \lvert x - y \rvert  \, \text{d} \gamma_r(x,y) 
\end{eqnarray*}
as required.
\end{proof}

\subsection{Proof of Theorem \ref{gammarint}} \label{gammarintproof}

Fix $r \in  \Lambda_{p,q}$ and write
\[
I_r \ = \  \int_{F \times F} \lvert x - y \rvert  \, \text{d} \gamma_r(x,y) .
\]
It is straightforward to see that
\begin{equation} \label{disjointcond}
\gamma_r \big( S_{i,j}(F \times F) \cap S_{i',j'}(F \times F) \big) = 0
\end{equation}
for $(i,j) \neq (i', j')$ and this gives
\begin{eqnarray*}
I_r &=& r \int_{S_{1,1}(F \times F)} \lvert x - y \rvert  \, \text{d} \big(\gamma_r \circ S_{1,1}^{-1}\big)(x,y) \ + \  (p-r) \int_{S_{1,2}(F \times F)} \lvert x - y \rvert  \, \text{d} \big(\gamma_r \circ S_{1,2}^{-1}\big)(x,y) \\ \\
&\,& + \ (q-r) \int_{S_{2,1}(F \times F)} \lvert x - y \rvert  \, \text{d} \big(\gamma_r \circ S_{2,1}^{-1}\big)(x,y) \ + \ (1-p-q+r) \int_{S_{2,2}(F \times F)} \lvert x - y \rvert  \, \text{d} \big(\gamma_r \circ S_{2,2}^{-1}\big)(x,y)
\end{eqnarray*}
We will treat each of these integrals separately, with the aim of using self-similarity to relate them back to $I_r$. We have
\begin{eqnarray*}
 \int_{S_{1,1}(F \times F)} \lvert x - y \rvert  \, \text{d} \big(\gamma_r \circ S_{1,1}^{-1}\big)(x,y) & = & \int_{S_{1,1}(F \times F)} \lvert c S_1^{-1}(x)+t_1 - cS_1^{-1}(y)-t_1 \rvert  \, \text{d} \big(\gamma_r \circ S_{1,1}^{-1}\big)(x,y) \\ \\
& = &  \int_{F \times F} \lvert cx - cy \rvert  \, \text{d}\gamma_r(x,y) \\ \\
&=& c I_r
\end{eqnarray*}
and, similarly,
\[
 \int_{S_{2,2}(F \times F)} \lvert x - y \rvert  \, \text{d} \big(\gamma_r \circ S_{2,2}^{-1}\big)(x,y)  \ = \  c I_r.
\]
Since for $(x,y) \in S_{1,2}(F \times F)$ we know $x\leq y$, we have
\begin{eqnarray*}
 \int_{S_{1,2}(F \times F)} \lvert x - y \rvert  \, \text{d} \big(\gamma_r \circ S_{1,2}^{-1}\big)(x,y) & = & \int_{S_{1,2}(F \times F)} \Big( c S_2^{-1}(y)+t_2 - cS_1^{-1}(x)-t_1  \Big) \, \text{d} \big(\gamma_r \circ S_{1,2}^{-1}\big)(x,y) \\ \\
& = &  \int_{F \times F} ( cy - cx )  \, \text{d}\gamma_r(x,y)  \  + \  \int_{F \times F} (t_2-t_1)  \, \text{d}\gamma_r(x,y) \\ \\
& = & - c \int_{F \times F} (x - y)  \, \text{d}\gamma_r(x,y)  \  + \   (t_2-t_1).
\end{eqnarray*}
It can be shown similarly that
\[
 \int_{S_{2,1}(F \times F)} \lvert x - y \rvert  \, \text{d} \big(\gamma_r \circ S_{2,1}^{-1}\big)(x,y) \ = \   c \int_{F \times F} (x - y)  \, \text{d}\gamma_r(x,y)  \  + \   (t_2-t_1).
\]

Putting the above integral estimates together and using Lemma \ref{techlem} gives
\begin{eqnarray*}
I_r & =& rcI_r \\ \\
&\,&  -  \ \frac{4c(t_2-t_1)(q-r)(p-r)^2}{(p-q)\big(1-c+c(p+q-2r)\big)} + \frac{c(p-r)(p+q-2r)}{p-q}I_r \ + \ (p-r)(t_2-t_1)\\ \\
&\,&  +  \  \frac{4c(t_2-t_1)(q-r)^2(p-r)}{(p-q)\big(1-c+c(p+q-2r)\big)} -  \frac{c(q-r)(p+q-2r)}{p-q}  I_r \ + \ (q-r)(t_2-t_1)\\ \\
&\,&  +  \ (1-p-q+r)cI_r \\ \\
&=& c I_r \ + \  (t_2-t_1)\bigg(p+q-2r - \frac{4c(q-r)(p-r)}{1-c+c(p+q-2r)} \bigg)
\end{eqnarray*}
which upon solving for $I_r$ yields
\[
I_r \ = \ (t_2-t_1)\frac{c(p-q)^2+(1-c)(p+q-2r)}{(1-c)^2+c(1-c)(p+q-2r)}
\]
as required. If at any point during the above proof the reader was concerned that we divided by zero, recall that the above analysis was only valid for $r$ in the \emph{open} interval $\Lambda_{p,q}$, which precludes this eventuality. 
\hfill \qed

\newpage

\subsection{Proof of Theorem \ref{gammarint2}} \label{gammarintproof2}

Fix $r \in  \Lambda_{p,q}$ and write
\[
I_{r,2} \ = \  \int_{F \times F} \lvert x - y \rvert^2  \, \text{d} \gamma_r(x,y) .
\]
Similar to the proof of Theorem \ref{gammarint2}, and using (\ref{disjointcond}), we have
\begin{eqnarray*}
I_{r,2} &=& r \int_{S_{1,1}(F \times F)} \lvert x - y \rvert^2  \, \text{d} \big(\gamma_r \circ S_{1,1}^{-1}\big)(x,y) \ + \  (p-r) \int_{S_{1,2}(F \times F)} \lvert x - y \rvert^2 \, \text{d} \big(\gamma_r \circ S_{1,2}^{-1}\big)(x,y) \\ \\
&\,& + \ (q-r) \int_{S_{2,1}(F \times F)} \lvert x - y \rvert^2  \, \text{d} \big(\gamma_r \circ S_{2,1}^{-1}\big)(x,y) \ + \ (1-p-q+r) \int_{S_{2,2}(F \times F)} \lvert x - y \rvert^2  \, \text{d} \big(\gamma_r \circ S_{2,2}^{-1}\big)(x,y)
\end{eqnarray*}
As before, we will treat each of these integrals separately, with the aim of using self-similarity to relate them back to $I_{r,2}$. We have
\begin{eqnarray*}
 \int_{S_{1,1}(F \times F)} \lvert x - y \rvert^2  \, \text{d} \big(\gamma_r \circ S_{1,1}^{-1}\big)(x,y) & = & \int_{S_{1,1}(F \times F)} \lvert c S_1^{-1}(x)+t_1 - cS_1^{-1}(y)-t_1 \rvert^2  \, \text{d} \big(\gamma_r \circ S_{1,1}^{-1}\big)(x,y) \\ \\
& = &  \int_{F \times F} \lvert cx - cy \rvert^2  \, \text{d}\gamma_r(x,y) \\ \\
&=& c^2 I_{r,2}
\end{eqnarray*}
and, similarly,
\[
 \int_{S_{2,2}(F \times F)} \lvert x - y \rvert^2  \, \text{d} \big(\gamma_r \circ S_{2,2}^{-1}\big)(x,y)  \ = \  c^2 I_{r,2}.
\]
Also, we have
\begin{eqnarray*}
 \int_{S_{1,2}(F \times F)} \lvert x - y \rvert ^2 \, \text{d} \big(\gamma_r \circ S_{1,2}^{-1}\big)(x,y) & = & \int_{S_{1,2}(F \times F)} \Big( c S_2^{-1}(y)+t_2 - cS_1^{-1}(x)-t_1  \Big)^2 \, \text{d} \big(\gamma_r \circ S_{1,2}^{-1}\big)(x,y) \\ \\
& = & \int_{F \times F} \Big( c(y-x)+(t_2 -t_1)  \Big)^2 \, \text{d} \big(\gamma_r \circ S_{1,2}^{-1}\big)(x,y) \\ \\
& = &  c^2 \int_{F \times F} ( y-x )^2  \, \text{d}\gamma_r(x,y)  \  + \   2c (t_2-t_1) \int_{F \times F} ( y-x )  \, \text{d}\gamma_r(x,y)  \\ \\
&\,& \qquad \qquad \qquad  \  + \  \int_{F \times F} (t_2-t_1)^2  \, \text{d}\gamma_r(x,y) \\ \\
& = &  c^2 I_{r,2} \ - \   2c (t_2-t_1) \int_{F \times F} (x-y )  \, \text{d}\gamma_r(x,y)  \ + \  (t_2-t_1)^2.
\end{eqnarray*}
It can be shown similarly that
\[
 \int_{S_{2,1}(F \times F)} \lvert x - y \rvert^2  \, \text{d} \big(\gamma_r \circ S_{2,1}^{-1}\big)(x,y) \ = \   c^2 I_{r,2} \ + \   2c (t_2-t_1) \int_{F \times F} ( x-y )  \, \text{d}\gamma_r(x,y)  \ + \  (t_2-t_1)^2.
\]
Putting the above integral estimates together and using Lemma \ref{techlem} and Theorem \ref{gammarint} gives
\begin{eqnarray*}
I_{r,2} & =& rc^2I_{r,2} \\ \\
&\,&  +  \   c^2(p-r) I_{r,2} \ - \   2c(p-r) (t_2-t_1) \int_{F \times F} (x-y )  \, \text{d}\gamma_r(x,y)  \ + \  (p-r)(t_2-t_1)^2\\ \\
&\,&  +  \  c^2(q-r) I_{r,2} \ + \   2c(q-r) (t_2-t_1) \int_{F \times F} (x-y )  \, \text{d}\gamma_r(x,y)  \ + \  (q-r)(t_2-t_1)^2\\ \\
&\,&  +  \ (1-p-q+r)c^2I_{r,2} \\ \\
&=& c^2 I_{r,2} \ + \  (p+q-2r)(t_2-t_1)^2  \\ \\
&\,&   - \ 2c(p-q)(t_2-t_1) \Bigg(\frac{4(t_2-t_1)(q-r)(p-r)}{(p-q)\big(1-c+c(p+q-2r)\big)} - \bigg( \frac{p+q-2r}{p-q} \bigg) \int_{F \times F} \lvert x - y \rvert  \, \text{d} \gamma_r(x,y) \Bigg) \\ \\
&=& c^2 I_{r,2} \ + \  (p+q-2r)(t_2-t_1)^2  \\ \\
&\,&   - \ 2c(t_2-t_1) \Bigg(\frac{4(t_2-t_1)(q-r)(p-r)}{1-c+c(p+q-2r)} - ( p+q-2r)  \frac{t_2-t_1}{1-c} \, \frac{c(p-q)^2+(1-c)(p+q-2r)}{(1-c)+c(p+q-2r)} \Bigg)
\end{eqnarray*}
which upon solving for $I_{r,2}$ and taking square roots yields
\begin{eqnarray*}
\Bigg( \int_{F \times F} \lvert x - y \rvert^2  \, \text{d} \gamma_r(x,y) \Bigg)^{1/2}  \ = \ I_{r,2}^{1/2}  &=&  \Bigg((t_2-t_1)^2\frac{2c(p-q)^2+(1-c)(p+q-2r)}{(1-c)^2(1+c)} \Bigg)^{1/2} \\ \\
&=& \frac{t_2-t_1}{1-c} \sqrt{ \frac{2c(p-q)^2+(1-c)(p+q-2r)}{1+c}}
\end{eqnarray*}
as required. Again, $r$ in the \emph{open} interval $\Lambda_{p,q}$ precludes the eventuality that we divided by zero at any point in the above proof.   Also, we take the positive square root in the final two lines, which we know exists by Proposition \ref{basicexp2}.
\hfill \qed

\subsection{Proof of Proposition \ref{lowerbound}} \label{lowerboundproof}

Let $\lambda \in [-1,1]$.  We have
\begin{eqnarray*}
 \int_{F } \lambda x \, \text{d}\mu_p(x) &=&   p \int_{S_1(F) } \lambda x \, \text{d} \big(\mu_p \circ S_1^{-1}\big) (x)  \ + \   (1-p) \int_{S_2(F) } \lambda x \, \text{d} \big(\mu_p \circ S_2^{-1}\big) (x) \\ \\
 &=&   p \int_{S_1(F) } \lambda \big( c(S_1^{-1}(x)) + t_1 \big)  \, \text{d} \big(\mu_p \circ S_1^{-1}\big) (x) \\ \\
&\,& \qquad  \qquad  \qquad  \qquad  \qquad  \qquad \ + \   (1-p) \int_{S_2(F) }  \lambda \big( c(S_2^{-1}(x)) + t_2 \big) \, \text{d} \big(\mu_p \circ S_2^{-1}\big) (x) \\ \\
 &=&   pc \int_{F} \lambda  x  \, \text{d} \mu_p (x) \ + \ \lambda p t_1  \ + \  (1-p)c \int_{F} \lambda  x  \, \text{d} \mu_p (x) \ + \ \lambda (1-p) t_2  \\ \\
 &=&   c \int_{F} \lambda  x  \, \text{d} \mu_p (x) \ + \ \lambda \big(p t_1   +  (1-p) t_2 \big)
\end{eqnarray*}
and hence
\[
 \int_{F } \lambda x \, \text{d}\mu_p(x) \  = \  \frac{\lambda \big(p t_1   +  (1-p) t_2 \big) }{1-c}.
\]
This yields
\[
 \int_{F } \lambda x \, \text{d}(\mu_p-\mu_q)(x)  \ = \  \frac{\lambda \big(p t_1   +  (1-p) t_2 \big) }{1-c} \ - \  \frac{\lambda \big(q t_1   +  (1-q) t_2 \big) }{1-c} \ =  \  \frac{\lambda (p-q)(t_1  - t_2 ) }{1-c},
\]
as required. \hfill \qed

\newpage

\section{Discussion of assumptions and future work} \label{discussion}

In this section we discuss possible directions for future work.  We arrange the different aspects roughly in order of what is in our opinion most interesting/most difficult to least interesting/least difficult.
\\ \\
\textbf{Overlaps:} It would be of great interest to allow the construction to have non-trivial overlaps, however, we believe this would be very difficult.  Our IFSs all satisfy the open set condition, which means that the $\mu_p$ and $\gamma_r$ measure of overlaps is zero.  Without this property our argument breaks down at (\ref{disjointcond0}) and (\ref{disjointcond}).  In particular, we would be interested in continuing the graph shown in Figure 2 in the range $c \in (1/2,1)$ in the case where $t_1= 0$ and $t_2 = 1-c$.  The self-similar measures in this setting are known as Bernoulli convolutions and are notoriously difficult to study.  See the paper \cite{bernoulli} for a survey of the rich and interesting history of these measures up until the year 2000.  We conjecture that the graph of $W_1(\mu_p, \mu_q)$ does not remain constant for $c>1/2$, but drops below $\lvert p-q \rvert$ for at least some values of $c$.  It would also be interesting to know if it is continuous.
\\ \\
\textbf{Different contraction ratios:}  It was crucial to our arguments that the contraction ratios of both maps were the same.  The key reason why this is important is that the product set $F \times F$ is itself a self-similar set.  If the contraction ratios were different, then $F \times F$ is strictly self-affine and the self-affine couplings we would look to use are much more difficult to analyse.   We could still decompose the integral in question into four constituent parts corresponding to the four maps in the product system, but the maps $S_{1,2}$ and $S_{2,1}$ would scale by different amounts in different direction.  We do not believe this situation is hopeless, but would perhaps require a different approach.
\\ \\
\textbf{Higher and non-integer moments:}  The same approach will generally work for computing $(\ref{keyintegral})$ for larger $\rho \in \mathbb{N}$.  The key technical difference is that one would require a `higher moments' version of Lemma \ref{techlem}.  It is convenient that the given `1st moment' version of this result is enough to compute the 2nd moment integral in Theorem \ref{gammarint2}.  This is due to the analogue of  Lemma \ref{techlem} being trivial for even powers.  Computing $(\ref{keyintegral})$ for $\rho \in (1, \infty) \setminus \mathbb{N}$ would require a new approach, although many of the same ideas would apply.
\\ \\
\textbf{Higher dimensions:}  We restricted ourselves to self-similar measures supported in $[0,1]$.  It would be equally natural to pose the same question in $[0,1]^n$ or indeed in a general compact metric space.  Provided the contraction ratios are constant, the strategy would proceed as before by looking at self-similar couplings supported on the self-similar product space.  The notation would be more cumbersome, but we see no initial difficulty in making this extension. 
\\ \\
\textbf{More than two maps:}  Our arguments concerning the upper bound go through in a very similar way for self-similar subsets of $[0,1]$ where the contraction ratios are constant, the translations are chosen to guarantee `measure separation' and there are $N$ maps.  This is perhaps the simplest generalisation of our setting -  where we assumed $N=2$.  The main difference is that the space of self-similar couplings is no longer a 1-parameter family, but a $(N-1)^2$-parameter family, and so the corresponding function $\Phi$ would be a map from an open simplex in $\mathbb{R}^{(N-1)^2}$ (analogous to $\Lambda_{p,q}$) to $\mathbb{R}$.  Since one of the primary objectives of this paper is `clarity over generality', we chose not to pursue this setting.
\\ \\
\textbf{Extension of the lower bound:}  Since the form of the functions $\phi$ used in the proof of  Proposition \ref{lowerbound} are so simple, the argument can be easily extended to give a lower bound for the 1st Wasserstein distance in a more general setting, which we describe here for completeness.  Suppose $S_1, \dots, S_N$ are similitudes mapping $[0,1]$ into itself of the form
\[
S_i(x) = c_ix +t_i
\]
where the contraction ratios $c_i \in (0,1)$ and translations $t_i$ have been chosen such that the open set condition is satisfied, i.e. images of the unit interval under two different maps from the IFS are either disjoint or intersect at a single point. Let $\mu_\textbf{\emph{p}}$ and $\mu_\textbf{\emph{q}}$ be the self-similar measures associated to two different probability vectors $\textbf{\emph{p}} = (p_1, \dots, p_N)$ and $\textbf{\emph{q}} = (q_1, \dots, q_N)$ respectively.   Again using test functions of the form $\lambda x$, following the argument in the proof of Proposition \ref{lowerbound} yields
\[
W_1(\mu_\textbf{\emph{p}}, \mu_\textbf{\emph{q}}) \ \geq \ \frac{ \bigg\lvert       \Big(\sum_{i=1}^N p_i t_i \Big) \Big(1-\sum_{i=1}^N q_i c_i \Big) \ - \    \Big(\sum_{i=1}^N q_i t_i \Big) \Big(1-\sum_{i=1}^N p_i c_i \Big)    \bigg\rvert}{\Big(1-\sum_{i=1}^N p_i c_i \Big)\Big(1-\sum_{i=1}^N q_i c_i \Big)},
\]
which if the contraction ratios are all equal to some constant $c \in (0,1)$, simplifies to
\[
W_1(\mu_\textbf{\emph{p}}, \mu_\textbf{\emph{q}}) \ \geq \ \frac{ \Big\lvert \sum_{i=1}^{N} (p_i-q_i)t_i \Big\rvert}{1-c}.
\]
At this point it would seem reasonable to conjecture that these are the correct values for $W_1(\mu_\textbf{\emph{p}}, \mu_\textbf{\emph{q}})$ in both cases, however, this is not true in either case.  Consider the second, less general, situation where the contraction ratios are equal and let $N=3$.  Choose $t_1 = 0$, $t_2 = c$ and $t_3=2c$.  Then for the probability vectors $\textbf{\emph{p}} = (0.4,0.5,0.1)$ and $\textbf{\emph{q}} = (0.6,0.1,0.3)$, the above lower bound evaluates to
\[
 \frac{ \Big\lvert (0.5-0.1)c+(0.1-0.3)\cdot 2c\Big\rvert}{1-c} \ = \  0,
\]
despite the measures being different.  This shows that if the Kantorovich-Rubinstein duality theorem is to be used to derive the lower bound in this more general setting, then more complicated test functions will have to be used.  The above formula does, however, give a promising lower bound in the case where there are two maps, but different contraction ratios.  In particular, 
\[
W_1(\mu_p, \mu_q) \ \geq \ \Bigg\lvert   \frac{    \Big(p t_1 + (1-p) t_2\Big) \Big(1-qc_1 - (1-q)c_2\Big) \ - \    \Big(qt_1 + (1-q)t_2 \Big) \Big(1-pc_1 - (1-p)c_2 \Big)   }{\Big(1-pc_1 - (1-p)c_2 \Big)  \Big(1-qc_1 - (1-q)c_2 \Big) }  \Bigg\rvert.
\]
We conjecture that this is the correct value.

\vspace{10mm}

\begin{centering}

\textbf{Acknowledgements}

The author was supported by the EPSRC grant EP/J013560/1.  He thanks Tapio Rajala for helpful discussions of this work.

\end{centering}

\end{document}